\newcommand{\de}{\mathrm{d}}
\newcommand{\R}{\mathbb{R}}
\newcommand{\ha}{\frac{1}{2}}
\newcommand{\be}{\begin{equation}}
\newcommand{\bey}{\begin{eqnarray}}
\newcommand{\ee}{\end{equation}}
\newcommand{\eey}{\end{eqnarray}}
\newcommand{\ba}{\begin{array}}
\newcommand{\ea}{\end{array}}
\newcommand{\sgn}{\mathrm{sgn}}
\theoremstyle{plain}                    
\newtheorem{theorem}{Theorem}[section]     
\newtheorem{lem}[theorem]{Lemma}            
\newtheorem{cor}[theorem]{Corollary}
\theoremstyle{definition}
\newcommand{\E}{\mathbb{E}}
\begin{document}
\title{Non-Standard Limit Theorems in Number Theory}
\author{F. Cellarosi$^*$, Ya.G. Sinai$^{*,\dag}$}

\maketitle

\let\oldthefootnote\thefootnote
\renewcommand{\thefootnote}{\fnsymbol{footnote}}
\footnotetext[1]{Mathematics Department, Princeton University. Princeton, NJ, U.S.A.}
\footnotetext[2]{Landau Institute of Theoretical Physics, Russian Academy of Sciences. Moscow, Russia.}
\let\thefootnote\oldthefootnote

\begin{flushright}
\emph{Dedicated to Yu. V. Prokhorov\\ on the occasion of his 80-th birthday}
\end{flushright}

There are many unusual limit theorems in Number Theory which are well-known to experts in the field but not so well-known to probabilists. The purpose of this paper is to discuss some examples of such theorems. They were chosen in order to be close to the field of interest of Yu.V. Prokhorov.

One of the main objects in Number Theory is the so-called M\"obius function. It is defined as follows
\be
\mu(n)=\begin{cases}1&\mbox{if $n$=1;}\\0&\mbox{if $n$ is not square-free;}\\ (-1)^k&\mbox{if $n$ is the product of $k$ distinct primes.}\end{cases}
\nonumber\ee
Throughout the paper, when we write $n=p_1p_2\cdots p_k$ we assume that $p_1<p_2<\ldots<p_k$ are the first $k$  prime numbers.
Many properties of the M\"obius function are connected with the Riemann zeta function. For example, while the Prime Number Theorem is equivalent to the fact that $$\sum_{n\leq N}\mu(n)=o(N),$$ the Riemann Hypothesis is equivalent to 
$$\sum_{n\leq N}\mu(n)=O_\varepsilon\!\left(N^{1/2+\varepsilon}\right)$$ for every $\varepsilon>0$.

Recently, a conjecture by Sarnak \cite{Sarnak-Mobius} has fostered a great interest towards the connections between the M\"{o}bius function and Ergodic Theory, and in particular the works of Furstenberg \cite{Furstenberg1967} and Green and Tao \cite{Green-Tao-Mobius}.
\section{A probabilistic model for square-free numbers}

Fix $m>1$ and introduce the set $\Omega_m$, whose elements have the form $n=\prod_{j=1}^mp_j^{\nu_j}$, where $\nu_j\in\{0,1\}$. Then $\mu(n)=\pm1$ iff $n\in\Omega_m$ for some $m$. Define on $\Omega_m$ the probability distribution $\Pi_m$ for which 
\be\pi_m(n)=\frac{1}{Z_m}\frac{1}{n}=\frac{1}{Z_m\prod_{j=1}^mp_j^{\nu_j}},\label{1}\ee
In (\ref{1})
 $Z_m$ is the normalizing factor and
\be Z_m=\sum_{\nu_1,\ldots,\nu_m}\frac{1}{\prod_{j=1}^m p_j^{\nu_j}}=\prod_{j=1}^m\left(1+\frac{1}{p_j}\right)=\exp\left\{\sum_{j=1}^m\ln\left(1+\frac{1}{p_j}\right)\right\}=\exp\left\{O(1)+\sum_{j=1}^m\frac{1}{p_j}\right\}\nonumber\ee
as $m\to\infty$.
Denote by $N(t)$ the number of primes which are less or equal than $t$. The Prime Number Theorem says that $N(t)\sim\frac{t}{\ln t}$ as $t\to\infty$ and a slightly stronger version asserts that 
\be N(t)-\frac{t}{\ln t}=O\!\left(\frac{t}{\ln^2t}\right)\label{strongPNT}.\ee
We can write, by summation by parts, 
\bey&&\sum_{j=1}^m\frac{1}{p_j}=\sum_{t=1}^{p_m}\frac{1}{t}\left(N(t)-N(t-1)\right)=\frac{N(p_m)}{p_m+1}+\sum_{t=1}^{p_m}\frac{N(t)}{t(t+1)}=\nonumber\\
&&=\frac{m}{p_m+1}+\sum_{t=1}^{p_m}N(t)\left(\frac{1}{t^2}+O\!\left(\frac{1}{t^3}\right)\right)=O(1)+\sum_{t=2}^{p_m}\frac{1}{t\ln t}=O(1)+\ln\ln p_m,\nonumber
\nonumber\eey

i.e. $Z_m\sim O(1)\ln p_m$. A more precise asymptotic follows from Mertens' product formula \cite{Mertens-1874}
\be\lim_{n\to\infty}\ln n\prod_{p\leq n}\left(1-\frac{1}{p}\right)=e^{-\gamma}\approx0.561459,\nonumber\ee
where $\gamma$ is Euler-Mascheroni constant. In fact
\be
\frac{1}{\ln n}\prod_{p\leq n}\left(1+\frac{1}{p}\right)=
\frac{\left(
\prod_{p\leq n}
\frac{1}{1-p^{-2}}\right)^{-1}
}
{\ln n
\prod_{p\leq n}
\left(1-\frac{1}{p}\right)
}\longrightarrow\frac{\zeta(2)^{-1}}{e^{-\gamma}}\hspace{.5cm}\mbox{as $n\to\infty$.}
\nonumber\ee
 Thus
\be Z_m\sim \frac{e^\gamma}{\zeta(2)}\ln p_m\label{asymptotic-Zm}
.\ee
By analogy with Statistical Physics, $Z_m$ is called \emph{partition function}.

It is easy to check that w.r.t. $\Pi_m$, the random variables $\nu_j$ are independent and
\be\Pi_m\{\nu_j=0\}=\frac{p_j}{1+p_j},\hspace{.5cm}\Pi_m\{\nu_j=1\}=\frac{1}{1+p_j},\hspace{.5cm}1\leq j\leq m.\nonumber\ee
Indeed,
\bey\Pi_m\{\nu_j=0\}&=&\frac{1}{Z_m}\sum_{\nu_1,\ldots,\nu_{j-1}}\sum_{\nu_{j+1},\ldots,\nu_m}\frac{1}{\prod_{l=1}^{j-1}p_l^{\nu_l}\prod_{l=j+1}^mp_l^{\nu_l}}=\nonumber\\
&=&\frac{\prod_{l=1}^{j-1}\left(1+\frac{1}{p_l}\right)\prod_{l=j+1}^{m}\left(1+\frac{1}{p_l}\right)}{\prod_{l=1}^{m}\left(1+\frac{1}{p_l}\right)}=\frac{p_j}{1+p_j}.\nonumber\eey
Since 
\be n=\prod_{j=1}^mp_j^{\nu_j}=\exp\left\{\sum_{j=1}^m\nu_j\ln p_j\right\},\nonumber\ee
the statistical properties of $n$ w.r.t. $\Pi_m$ are determined by the properties of $\sum_{j=1}^m\nu_j\ln p_j$, which are sums of independent random variables. However the Central Limit Theorem cannot be applied here because $\nu_j$ are not identically distributed. Instead, the following limit theorem is valid.

\begin{theorem}\label{THM1}
Let $\zeta_m=\frac{1}{\ln p_m}\sum_{j=1}^m\nu_j\ln p_j$. As $m\to\infty$ the distributions of $\zeta_m$ converge weakly to the infinitely divisible distribution whose characteristic function $\varphi(\lambda)$ has the form
\be\varphi(\lambda)=\exp\left\{\int_0^1\frac{e^{i\lambda v}-1}{v}\de v\right\}.\label{2}\ee
\end{theorem}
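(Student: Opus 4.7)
The plan is to prove pointwise convergence of the characteristic functions $\varphi_m(\lambda):=E_{\Pi_m}[e^{i\lambda\zeta_m}]\to\varphi(\lambda)$ and then invoke L\'evy's continuity theorem. The computation rests on three ingredients already available in the excerpt: independence of the $\nu_j$ under $\Pi_m$, the explicit marginals $\Pi_m\{\nu_j=1\}=1/(1+p_j)$, and the strong form of the Prime Number Theorem \eqref{strongPNT}.

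By independence,
\be
\varphi_m(\lambda)=\prod_{j=1}^m\left(\frac{p_j}{1+p_j}+\frac{e^{i\lambda \ln p_j/\ln p_m}}{1+p_j}\right)=\prod_{j=1}^m\left(1+\frac{e^{i\lambda \ln p_j/\ln p_m}-1}{1+p_j}\right).\nonumber
\ee
Since each factor differs from $1$ by at most $2/(1+p_j)\leq 2/3$, I take logarithms and use $\ln(1+z)=z+O(|z|^2)$ to obtain
\be
\ln\varphi_m(\lambda)=\sum_{j=1}^m\frac{e^{i\lambda \ln p_j/\ln p_m}-1}{1+p_j}+R_m(\lambda),\qquad |R_m(\lambda)|\leq C\sum_{j=1}^m\frac{|e^{i\lambda \ln p_j/\ln p_m}-1|^2}{(1+p_j)^2}.\nonumber
\ee
The remainder tends to zero by a dominated-convergence argument on the sum: each summand is bounded by $4/p_j^2$ (a summable dominating sequence) and vanishes individually because $\ln p_j/\ln p_m\to 0$ for each fixed $j$. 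Replacing $(1+p_j)^{-1}$ by $p_j^{-1}$ in the main sum incurs an additional error of order $\sum_j p_j^{-2}\,|e^{i\lambda\ln p_j/\ln p_m}-1|$, controlled identically.

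It therefore suffices to show
\be
S_m(\lambda):=\sum_{j=1}^m\frac{e^{i\lambda \ln p_j/\ln p_m}-1}{p_j}\longrightarrow\int_0^1\frac{e^{i\lambda v}-1}{v}\,\de v.\nonumber
\ee
Here I would apply Abel summation with respect to the prime counting function $N(t)$, then use \eqref{strongPNT} to replace the prime measure by $\de t/\ln t$ with an error that is $O(1/\ln^2 t)$ relative to the main density. This converts $S_m(\lambda)$ into $\int_2^{p_m}\frac{e^{i\lambda\ln t/\ln p_m}-1}{t\ln t}\,\de t$ up to boundary contributions and an error term bounded by a convergent series. Finally the change of variables $v=\ln t/\ln p_m$ sends $\de t/(t\ln t)$ to $\de v/v$ and the range $[2,p_m]$ to $[\ln 2/\ln p_m,\,1]$, yielding $\int_{\ln 2/\ln p_m}^1\frac{e^{i\lambda v}-1}{v}\,\de v$. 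Because the integrand is bounded near the origin by $|\lambda|$, this integral converges as $m\to\infty$ to the claimed $\int_0^1(e^{i\lambda v}-1)\,v^{-1}\de v$.

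The hard part will be the bookkeeping in the Abel-summation step: tracking boundary terms, showing that the $N(t)-t/\ln t$ contribution produces an error that is genuinely $o(1)$ uniformly for $\lambda$ in compact sets, and verifying that the region near $t=2$ (where after substitution one has the integrable singularity $1/v$ cut off) gives a vanishing contribution. Once these estimates are assembled, pointwise convergence $\ln\varphi_m(\lambda)\to\int_0^1\frac{e^{i\lambda v}-1}{v}\de v$ follows and L\'evy's continuity theorem closes the argument; the infinite divisibility of $\varphi$ is then transparent from the L\'evy--Khinchin form of the exponent.
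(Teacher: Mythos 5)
Your proof is correct, and it takes a genuinely different route from the paper's. The paper keeps the full logarithm $f_m(s)=\ln\bigl(1+\tfrac{1}{1+s}(e^{i\lambda\ln s/\ln p_m}-1)\bigr)$ intact, performs summation by parts on $\sum_t (N(t)-N(t-1))f_m(t)$, and must split $f_m$ into real and imaginary parts because the mean value theorem does not apply to complex-valued functions; this produces the formula (\ref{pf0}) together with six second-derivative error terms and, after differentiating the main $f_m'$-sum, twelve further sums that are estimated one by one in the Appendix. You instead linearize first: using $\ln(1+z)=z+O(|z|^2)$ on each factor (legitimate since $|z_j|\le 2/(1+p_j)\le 2/3$), you obtain $\ln\varphi_m=\sum_j\frac{e^{i\lambda\ln p_j/\ln p_m}-1}{1+p_j}+R_m$ with $R_m\to 0$ by dominated convergence over $j$ (each summand is $O(p_j^{-2})$ and vanishes pointwise as $\ln p_j/\ln p_m\to 0$), and only \emph{then} apply Abel summation and the strong PNT to the clean sum $\sum_j p_j^{-1}\bigl(e^{i\lambda\ln p_j/\ln p_m}-1\bigr)$. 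This eliminates the real/imaginary splitting entirely and replaces the paper's twelve-plus-six error-term ledger with a single remainder $R_m$ plus the standard $N(t)=t/\ln t+O(t/\ln^2 t)$ bookkeeping. The trade-off is that you leave the Abel-summation estimates (boundary terms, the $O(t/\ln^2 t)$ contribution, the cutoff near $t=2$) sketched rather than carried out; these are routine and go through — e.g.\ $|g_m'(t)|\le C|\lambda|(1+\ln t)/(t^2\ln p_m)$ makes the $N(t)-t/\ln t$ error $O(\ln\ln p_m/\ln p_m)$ — but they should be written out to match the level of rigor of the rest. There is no branch issue in taking logarithms: you only need $\varphi_m=e^{\sum_j\ln w_j}$ with each $\ln w_j$ principal, which holds because each factor $w_j$ lies in the disc $|w-1|<1$; convergence of the exponent then gives convergence of $\varphi_m$ directly. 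L\'evy continuity then applies since $\varphi$ is manifestly continuous at $0$. In short: same skeleton (characteristic functions $\to$ Abel summation $\to$ PNT $\to$ substitution $v=\ln t/\ln p_m$), but your early linearization is a simpler and arguably cleaner organization of the error analysis than the paper's.
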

\begin{proof}
The characteristic function $\varphi_m$ of $\zeta_m$ is
\bey \varphi_m(\lambda)&=&\E e^{i\lambda\zeta_m}=\E \exp\left\{\frac{i\lambda}{\ln p_m}\sum_{j=1}^m\nu_j\ln p_j\right\}=\prod_{j=1}^m\left(\frac{p_j}{1+p_j}+\frac{1}{1+p_j}e^{\frac{i\lambda\ln p_j}{\ln p_m}}\right)=\nonumber\\
&=&\prod_{j=1}^m\left(1+\frac{1}{1+p_j}\left(e^{\frac{i\lambda\ln p_j}{\ln p_m}}-1\right)\right)=\nonumber\\
&=&\exp\left\{\sum_{t=1}^{p_m}\left(N(t)-N(t-1)\right)\ln\left(1+\frac{1}{1+t}\left(e^{\frac{i\lambda \ln t}{\ln p_m}}-1\right)\right)\right\}=\nonumber\\
&=&\exp\left\{f_m(p_m+1)N(p_m)-\sum_{t=1}^{p_m}N(t-1)(f_m(t+1)-f_m(t))\right\},\nonumber\eey
by summation by parts, where $f_m(s)=\ln\left(1+\frac{1}{1+s}\left(e^{\frac{i\lambda\ln s}{\ln p_m}}-1\right)\right)$.
Since $f_m$ is complex-valued, the identity $f_m(t+1)-f_m(t)=f_m'(t+\tau)$ for some $0<\tau<1$ does not follow from the mean value theorem and we have to work with the real and imaginary parts separately. Writing $f_m=\Re f_m+i\Im f_m$ we have
\be
\Re f_m(s)=\ln\left|1+\frac{1}{1+s}\left(e^{\frac{i\lambda\ln s}{\ln p_m}}-1\right)\right|=\frac{1}{2}\ln\left(\frac{s^2+2s\cos\left(\frac{\lambda\ln s}{\ln p_m}\right)+1}{(1+s)^2}\right)\nonumber
\ee
and (by choosing the principal branch of the natural logarithm)
\be
\Im f_m(s)=\arg\left(1+\frac{1}{1+s}\left(e^{\frac{i\lambda\ln s}{\ln p_m}}-1\right)\right)
=\arctan\left(\frac{
\sin\left(\frac{\lambda\ln s}{\ln p_m}\right)}{
s+
\cos
\frac{\lambda\ln s}{\ln p_m}
}\right).
\nonumber
\ee
Now, by applying the mean value theorem twice to $\Re f_m$ and $\Im f_m$ separately, we get
\bey 
\Re f_m(t+1)-\Re f_m(t)=(\Re f_m)'(t+\tau_1)=(\Re f_m)'(t)+\tau_1(\Re f_m)''(t+\tau_1')
\nonumber
\eey
for some $0<\tau_1'<\tau_1<1$, and
\bey
\Im f_m(t+1)-\Im(f_m)(t)=(\Im f_m)'(t+\tau_2)=(\Im f_m)'(t)+\tau_2(\Im f_m)''(t+\tau_2')
\nonumber\eey
for some $0<\tau_2'<\tau_2<1$.
Thus
\be
\ln\varphi_m(\lambda)=f_m(p_m+1)N(p_m)-\sum_{t=1}^{p_m}N(t-1)\left(f_m'(t)+\tau_1(\Re f_m)''(t+\tau_1')+\tau_2(\Im f_m)''(t+\tau_2')\right).\label{pf0}
\ee
We claim that the sum involving $f_m'(t)$ gives the main term. In fact,  the first term and the other sums in (\ref{pf0}) tend to zero as $m\to\infty$ (see Appendix). Thus, the main term comes from the following sum:
\bey
&&-\sum_{t=1}^{p_m}N(t-1)f'_m(t)=
-\sum_{t=2}^{p_m}\left(\frac{t}{\ln t}+O\!\left(\frac{t}{\ln^2 t}\right)\right)\frac{1}{1+\frac{1}{1+t}\left(e^{i\lambda\frac{i\lambda\ln t}{\ln p_m}}-1\right)}\cdot\nonumber\\
&&\cdot\left[-\frac{1}{(t+1)^2}\left(e^{i\lambda\frac{i\lambda\ln t}{\ln p_m}}-1\right)+\frac{1}{t(t+1)}e^{i\lambda\frac{i\lambda\ln t}{\ln p_m}}\frac{i\lambda}{\ln p_m}\right]=\nonumber\\
&&=\sum_{t=1}^{p_m}\left(\frac{1}{t\ln t}+O\!\left(\frac{1}{t\ln^2 t}\right)\right)\left(1+\frac{1-e^{\frac{i\lambda\ln t}{\ln p_m}}}{t+e^{\frac{i\lambda\ln t}{\ln p_m}}}\right)\cdot\nonumber\\
&&\cdot\left[\left(e^{\frac{i\lambda\ln t}{\ln p_m}}-1\right)-\frac{(2t+1)\left(e^{\frac{i\lambda\ln t}{\ln p_m}}-1\right)}{(t+1)^2}-\frac{i\lambda}{\ln p_m}\frac{t}{t+1}e^{\frac{i\lambda\ln t}{\ln p_m}}\right]\label{pf1}
\eey
By opening the brackets in (\ref{pf1}) we obtain twelve sums. Let us look at the first sum and consider 
the change of variables (which will be used in the Appendix too) $v=v(t)=\frac{\ln t}{\ln p_m}$  for which  $\de v=v(t+1)-v(t)=v'(t+\tau_3)=v'(t)+\tau_3 v''(t+\tau_3')$ for some $0<\tau_3'<\tau_3<1$. We get

\be\sum_{t=2}^{p_m}\frac{1}{t\ln t}\left(e^{\frac{i\lambda\ln t}{\ln p_m}}-1\right)=\sum_{v}\left(\de v+\frac{\tau}{(t+\tau')^2\ln p_m}\right)\frac{e^{i\lambda v}-1}{v}\longrightarrow\int_0^1\frac{e^{i\lambda v}-1}{v}\de v\nonumber
\ee
as $m\to\infty$ since for some $C>0$
\be\left|\sum_{t=2}^{p_m}\frac{\tau}{(t+\tau')^2\ln p_m}\frac{e^{i\lambda v}-1}{v}\right|\leq \frac{C|\lambda|}{\ln p_m}\sum_{t=2}^{p_m}\frac{1}{t^2}\longrightarrow0.\nonumber
\ee
All the remaining eleven sums coming from (\ref{pf1}) tend to zero (see Appendix) and this concludes the proof of Theorem \ref{THM1}.
\end{proof}
Notice that $$\int\frac{\cos(\lambda v)-1}{v}\de v=-\int_{|\lambda| v}^{\infty}\frac{\cos u}{u}\de u-\ln v\hspace{1cm}\mbox{and}\hspace{1cm}\lim_{x\to0+}\left(-\int_{x}^\infty\frac{\cos u}{u}\de u-\ln x\right)=\gamma,$$
where $\gamma$ is the Euler-Mascheroni constant as before.
Therefore the improper integral $\int_0^1\frac{\cos(\lambda v)-1}{v}\de v$ converges to $-\gamma-\int_{|\lambda|}^\infty\frac{\cos u}{u}\de u-\ln|\lambda|$. On the other hand 
$$\int\frac{\sin(\lambda v)}{v}\de v=\sgn(\lambda)\int_0^{|\lambda|v}\frac{\sin u}{u}\de u\hspace{1cm}\mbox{gives}\hspace{1cm}\int_0^1\frac{\sin(\lambda v)}{v}\de v=\sgn(\lambda)\int_0^{|\lambda|}\frac{\sin u}{u}\de u.$$
This shows that 
\be\varphi(\lambda)=\begin{cases}
      \displaystyle\exp\left\{-\left(\gamma+\int_{\lambda}^\infty\frac{\cos u}{u}\de u+\ln \lambda\right) +i\int_0^\lambda\frac{\sin u}{u}\de u\right\}& \text{$\lambda>0$ }, \\
      &\\
      1& \text{$\lambda$=0 }, \\
      &\\
     \displaystyle\exp\left\{-\left(\gamma+\int_{-\lambda}^\infty\frac{\cos u}{u}\de u+\ln (-\lambda)\right) -i\int_0^{-\lambda}\frac{\sin u}{u}\de u\right\}& \text{$\lambda<0$}.
\end{cases}\nonumber\ee

It is known (see \cite{deBruijn-1951a}) that $\varphi(\lambda)$ is the characteristic function of the \emph{Dickman-De Bruijn distribution}, with density $e^{-\gamma}\rho(t)$, where $\rho(t)$ is 
determined by the initial condition
\be\rho(t)=\begin{cases}0,&t\leq0;\\1,&0<t\leq1,\end{cases}\ee
and 
the integral equation \be t\rho(t)=\int_{t-1}^t\rho(s)\de s,\hspace{.5cm}t\in\R.\nonumber\ee
It also satisfies the delay differential equation \be t \rho'(t)+\rho(t-1)=0\nonumber\ee
for $t\geq1$ (at $t=1$ we consider the right derivative) and for every $k=1,2,3,\ldots$ there is an analytic function $\rho_k(t)$ that gives $\rho(t)$ on $k-1\leq t\leq k$. For example, $\rho_1\equiv1$, $\rho_2(t)=1-\ln t$ and $\rho_3(t)=1-\ln t+\int_2^t\ln(u-1)\frac{\de u}{u}$. It is also easy to see that $\rho\in C^k([k,\infty))$ for each $k$.

Among other properties of $\rho(t)$ one can mention that it is log-concave on $[1,\infty)$ and 
\be
\rho(t)=\exp\left\{-t\left(\ln t+\ln\ln t-1+\frac{\ln\ln t}{\ln t}+O\!\left(\frac{(\ln\ln t)^2}{(\ln t)^2}\right)\right)\right\}
\nonumber\ee
as $t\to\infty$. 
In other words, the limiting density $e^{-\gamma}\rho(t)$ is constant on the interval $(0,1]$, where it takes the value $e^{-\gamma}$, and decays faster then exponentially on $(1,\infty)$, like Poisson distribution. In particular, all its moments exist.

The Dickman-De Bruijn density $\rho$ first appeared in the theory of \emph{smooth numbers} (i.e. numbers with small prime factors). Let $\Psi(x,y)$ denote the number of integers $\leq x$ whose prime factors are $\leq y$. Dickman \cite{Dickman-1930} showed that $\Psi(x,x^{1/u})\sim x\rho(u)$ as $x\to\infty$. The range of $y$ such that the asymptotic formula $\Psi(x,y)\sim x\rho(u)$, where $x=y^u$, has been significantly enlarged by De Bruijn \cite{deBruijn-1951a, deBruijn-1951b, deBruijn-1966} ($y\geq\exp((\ln x)^{5/8+\varepsilon})$) and Hildebrand \cite{Hildebrand-1984a} ($y\geq\exp((\ln\ln x)^{5/3+\varepsilon})$). Notice that in our ensemble $\Omega_m$ (where each element is weighted, not simply counted) we have $x=p_1p_2\cdots p_m$ and $y=p_m$ and thus $y\sim\ln x$. In this regime Erd\"{o}s \cite{Erdos-1963} showed that $\ln\Psi(x,\ln x)\sim \frac{\ln 4\ln x}{\ln\ln x}$ as $x\to\infty$ and therefore the asymptotic is no longer given by the function $\rho$. In other words a \emph{phase transition} occurs in the asymptotic behavior of $\Psi(x,y)$. For a survey on the theoretical and computational aspects of smooth numbers see \cite{Granville2008}.\\

It is worth to mention that in many limit theorems in Number Theory there appear limiting densities which are constants on some interval starting at 0. An example can be found in the work of 
%
Elkies and McMullen \cite{Elkies-McMullen} on the distribution of the gaps in the sequence $\{\sqrt n\mod 1\}$.\\

Here is another example from Probability Theory where the Dickman-De Bruijn distribution appears. Let $\{\eta_j\}_{j\geq1}$ be a sequence of independent random variables such that 
\be P\{\eta_k=k\}=\frac{1}{k}\hspace{1cm}\mbox{and}\hspace{1cm}P\{\eta_k=0\}=1-\frac{1}{k},\nonumber\ee and let $\theta_n=\sum_{j=1}^n\eta_j$ then \be
\lim_{n\to\infty}P\{n^{-1}\theta_n<x\}=e^{-\gamma}\int_0^x\rho(t)\de t.\nonumber\ee

Theorem \ref{THM1} has several important corollaries and applications. An immediate consequence of (\ref{2}) is that
\be\Pi_m\{n\leq p_m^s\}=\sum_{n\leq p_m^s,\,n\in\Omega_m}\pi_m(x)\longrightarrow e^{-\gamma}\int_0^s\rho(t)\de t\nonumber\ee
as $m\to\infty$. For instance, for $s=2$ we get $e^{-\gamma}(3-\ln4)\approx0.90603$. In other words, despite the fact that the largest element of our ensemble $\Omega_m$ is of order $m^m$, approximately 90\% of the ``mass'' of our probability distribution $\Pi_m$ is concentrated on numbers less than $p_m^2$ for large $m$.\\


Let us fix $0<\sigma\leq1$ and decompose the interval $(0,\sigma)$ onto $K$ equal intervals $(\delta_k,\delta_{k+1})$, $\delta_k=\frac{\sigma k}{K}$, $k=0,
\ldots,K-1$. For fixed $K$, Theorem \ref{THM1} states that
\be\Pi_m\left\{\delta_k<\frac{\ln n}{\ln p_m}<\delta_{k+1}\right\}\longrightarrow\frac{e^{-\gamma}\sigma}{K}\label{thm1-for-interval-K}\ee
as $m\to\infty$. Let us consider the error term in (\ref{thm1-for-interval-K})
\be E_m^{(\sigma)}(k,K):=\Pi_m\left\{\delta_k<\frac{\ln n}{\ln p_m}<\delta_{k+1}\right\}-\frac{e^{-\gamma}\sigma}{K}.\nonumber\ee
In the rest of this paper we provide some estimates about the error terms $E_m^{(\sigma)}(k,K)$ when $K$ grows with $n$.
We prove the following
\begin{theorem}\label{THM2}
For every $\varepsilon>0$ and every function $K(m)$ such that $\lim_{m\to\infty}\frac{\ln^3p_m}{K(m)^2}=c\geq0$ there exists $m^*=m^*(\varepsilon,K)$ such that the inequalities
\be
-\frac{c\sigma^3}{12\zeta(2)}-\varepsilon\leq\frac{Z_m}{p_m^\sigma}\sum_{k=0}^{K(m)-1}p_m^{\delta_k}E_m^{(\sigma)}(k,K(m))\leq
\frac{c\sigma^3}{12\zeta(2)}+\varepsilon\label{statement-THM2}\ee
hold for every $m\geq m^*$ and every $0<\sigma\leq 1$.
\end{theorem}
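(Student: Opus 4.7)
The plan is to split $Q_m := \frac{Z_m}{p_m^\sigma}\sum_{k=0}^{K-1} p_m^{\delta_k}E_m^{(\sigma)}(k,K)$ into the ``empirical'' piece $P_1 := (p_m^\sigma)^{-1}\sum_k p_m^{\delta_k}\sum_{\text{sq-free }n\in I_k}n^{-1}$ (using $\Pi_m\{\zeta_m\in(\delta_k,\delta_{k+1})\}=Z_m^{-1}\sum_{n\in I_k}n^{-1}$, where $I_k:=(p_m^{\delta_k},p_m^{\delta_{k+1}}]$) and the ``Dickman-target'' piece $P_2 := \frac{Z_m e^{-\gamma}h}{p_m^\sigma}\sum_k p_m^{\delta_k}$, with $h:=\sigma/K$ and $L:=\ln p_m$. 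The piece $P_2$ is computable exactly via the geometric identity $\sum_{k=0}^{K-1}p_m^{\delta_k}=(p_m^\sigma-1)/(e^{hL}-1)$; combined with the Bernoulli expansion $hL/(e^{hL}-1)=1-hL/2+(hL)^2/12+O((hL)^4)$ and Mertens' formula with Prime-Number-Theorem error $Z_m e^{-\gamma}/L=1/\zeta(2)+O(e^{-c'\sqrt L})$, it admits a closed asymptotic expansion in $hL$.

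For $P_1$, I would apply Abel summation within each $I_k$ using the square-free counting estimate $N_{\mathrm{sf}}(x)=x/\zeta(2)+E(x)$ with $|E(x)|=O(\sqrt x)$, obtaining $\sum_{\text{sq-free }n\in I_k}n^{-1}=hL/\zeta(2)+r_k$ with $|r_k|=O(p_m^{-\delta_k/2})$. Summed against $p_m^{\delta_k}/p_m^\sigma$, the main term $hL/\zeta(2)$ reproduces the \emph{same} Bernoulli expansion as $P_2$ but with the prefactor $Z_m e^{-\gamma}/L$ replaced by $1/\zeta(2)$. The two main parts therefore cancel up to the Mertens discrepancy $(1-p_m^{-\sigma})(hL/(e^{hL}-1))(1/\zeta(2)-Z_m e^{-\gamma}/L)=O(e^{-c'\sqrt L})$, and the residual $(p_m^\sigma)^{-1}\sum_k p_m^{\delta_k}r_k$ is bounded by $\sum_k p_m^{\delta_k/2}/p_m^\sigma\sim 2K/(\sigma L\,p_m^{\sigma/2})$, which under $K(m)^2\sim L^3/c$ becomes $O(\sqrt{L/c}/(\sigma\,p_m^{\sigma/2}))$.

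The nontrivial content of the bound comes from the Euler-Maclaurin correction at order $(hL)^2/12$: isolating it identifies the surviving main contribution to $Q_m$ as $(1-p_m^{-\sigma})(hL)^2/(12\zeta(2))=\sigma^2(1-p_m^{-\sigma})c/(12 L\zeta(2))+o(1)$. The elementary inequality $1-p_m^{-\sigma}\le\min(1,\sigma L)$ converts this into the uniform upper bound $\sigma^3 c/(12\zeta(2))$ (since $\sigma^2/L\le\sigma^3$ for $\sigma L\ge 1$, and $\sigma L\cdot\sigma^2/L=\sigma^3$ for $\sigma L\le 1$), and a symmetric argument yields the matching lower bound, giving the claimed two-sided estimate modulo the $\varepsilon$ tolerance.

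The main obstacle will be making all of this uniform over $\sigma\in(0,1]$. For $\sigma$ bounded away from $0$ the error estimates decay polynomially or faster in $m$, but for $\sigma$ on the order of $1/L$ the intervals $I_k$ may contain at most one square-free integer for small $k$, the naive density approximation breaks down, and both the main-term cancellation and the residual bounds must be tracked with explicit use of the factor $1-p_m^{-\sigma}\le\sigma L$ to absorb the boundary effects into $\sigma^3 c/(12\zeta(2))$. Once this is controlled, choosing $m^*=m^*(\varepsilon,K)$ large enough that each explicit error is at most $\varepsilon/4$ yields the desired inequalities for all $m\ge m^*$ and all $\sigma\in(0,1]$.
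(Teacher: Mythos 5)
Your plan takes a genuinely different and, in principle, sharper route than the paper, but it contains an internal inconsistency that undoes your main step, and the $\sigma$-uniformity worry you flag at the end is in fact fatal to the theorem as literally stated.

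The paper never touches the square-free counting function inside each interval $I_k$. It writes $M_m(p_m^\sigma)/p_m^\sigma$ as the $\Pi_m$-weighted sum $\frac{Z_m}{p_m^\sigma}\sum_{n\le p_m^\sigma}n\pi_m(n)$, replaces $n\in I_k$ by the interval endpoint $p_m^{\delta_{k+1}}$ (resp.\ $p_m^{\delta_k}$), and then invokes Lemma~\ref{lemma} together with $M_m(p_m^\sigma)/p_m^\sigma\to 1/\zeta(2)$ and $Z_m/\ln p_m\to e^\gamma/\zeta(2)$. The constant $c\sigma^3/(12\zeta(2))$ is exactly the $M(b-a)^3/(12K^2)$ term of that lemma, with $M=\sup_{[0,\sigma]}|(p_m^t)''|=p_m^\sigma\ln^2 p_m$ --- a crude \emph{uniform} bound on the second derivative. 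Your decomposition $Q_m=P_1-P_2$ bypasses this endpoint approximation entirely and replaces it by the precise density $N_{\mathrm{sf}}(x)=x/\zeta(2)+O(\sqrt x)$, which is a stronger idea.

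But your third paragraph contradicts your second. You correctly observe that both $P_1$ and $P_2$ equal $(\mathrm{prefactor})\cdot\frac{hL}{e^{hL}-1}(1-p_m^{-\sigma})$ plus an error, with only the prefactors ($1/\zeta(2)$ versus $Z_m e^{-\gamma}/L$) differing. The entire Bernoulli series $1-\tfrac{hL}{2}+\tfrac{(hL)^2}{12}-\cdots$ therefore appears identically in both pieces and cancels: one gets
\begin{equation*}
Q_m=\Bigl(\tfrac{1}{\zeta(2)}-\tfrac{Z_m e^{-\gamma}}{\ln p_m}\Bigr)\cdot\frac{hL}{e^{hL}-1}\,(1-p_m^{-\sigma})+R,
\end{equation*}
and no free-standing $(1-p_m^{-\sigma})(hL)^2/(12\zeta(2))$ term survives; the $(hL)^2/12$ term is multiplied by the tiny Mertens discrepancy and is negligible. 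What your analysis actually shows, for each fixed $\sigma>0$, is that $Q_m\to 0$ --- a conclusion \emph{stronger} than the theorem's two-sided $c\sigma^3/(12\zeta(2))+\varepsilon$ bound. The constant $c\sigma^3/(12\zeta(2))$ is an artifact of the paper's cruder Lemma~\ref{lemma} and simply does not arise in your setup, so the argument in your third paragraph, which tries to manufacture it from an Euler--Maclaurin term, is not what your decomposition produces.

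On uniformity in $\sigma$: the obstacle you flag cannot be absorbed by $1-p_m^{-\sigma}\le\sigma\ln p_m$, because the theorem as stated (with $m^*$ depending only on $\varepsilon$ and $K$) is false. Take $\sigma=1/(2\ln p_m)$. Then $p_m^\sigma=e^{1/2}<2$, so no $n\ge 2$ lies in $(1,p_m^\sigma]$, hence $E_m^{(\sigma)}(k,K)=-e^{-\gamma}\sigma/K$ for all $k$, and a direct computation using the geometric sum gives
\begin{equation*}
\frac{Z_m}{p_m^\sigma}\sum_{k=0}^{K-1}p_m^{\delta_k}E_m^{(\sigma)}(k,K)\longrightarrow-\frac{1-e^{-1/2}}{\zeta(2)}\approx -0.24,
\end{equation*}
while $c\sigma^3/(12\zeta(2))\to 0$; for small $\varepsilon$ the lower inequality in~\eqref{statement-THM2} cannot hold. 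The paper's own proof has the same gap: it treats $M_m(p_m^\sigma)/p_m^\sigma\to 1/\zeta(2)$ as if uniform in $\sigma$, and moreover its decomposition of $\sum_{n\le p_m^\sigma}$ into the $K$ intervals silently drops the term $n=1$ (whose contribution $1/p_m^\sigma$ is bounded away from zero precisely in this regime). So your last paragraph identifies a real defect, but the proposed fix cannot work; the statement is only salvageable if $m^*$ is allowed to depend on $\sigma$, in which case both your route and the paper's become routine.
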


An important tool in the proof of Theorem \ref{THM2} is given by the counting function $$M_m(t)=\#\left\{n\leq t:\: n\in\Omega_m\right\}.$$ This is analogous to the classical quantity 
$$M(t)=\#\left\{n\leq t:\:\mu(n)\neq0\right\},$$ for which the asymptotic
\be\lim_{t\to\infty}\frac{M(t)}{t}=\frac{1}{\zeta(2)}=\frac{6}{\pi^2}\approx0.607927.\label{1/zeta2}\nonumber\ee
holds (see, e.g., \cite{Montgomery-Vaughan}).
%
Even though the ensemble $\Omega_m$ is very sparse, 
 its initial segment of length $p_m$ 
 contains all square-free numbers less or equal than $p_m$. In particular $\lim_{m\to\infty}\frac{M_m(p_m^\sigma)}{p_m^\sigma}=\frac{1}{\zeta(2)}$ for every $0<\sigma\leq1$.  For $\sigma=1$ this fact can be rephrased as 
$$\lim_{m\to\infty}\frac{1}{p_m}\sum_{n\leq p_m}\mu^2(n)=\frac{1}{\zeta(2)}$$ 
and can be compared with 
$$\lim_{m\to\infty}\frac{1}{\ln p_m}\sum_{n\leq p_m}\frac{\mu^2(n)}{n}=e^{-\gamma},$$
which is a corollary of our Theorem \ref{THM1}.\\ 
%

The following Lemma provides some simple estimates that will be used in the proof of  Theorem \ref{THM2}.
\begin{lem}\label{lemma}
\be0<\sum_{k=0}^{K-1}p_m^{\delta_{k+1}}\frac{\sigma}{K}-\frac{p_m^\sigma-1}{\ln p_m}\leq\frac{\sigma^3 p_m^\sigma \ln^2 p_m}{12 K^2}+\frac{\sigma(p_m^\sigma-1)}{2K}\label{lem-inequalities}\ee
\be  -\frac{\sigma^3 p_m^\sigma \ln^2 p_m}{12 K^2}-\frac{\sigma(p_m^\sigma-1)}{2K} \leq\sum_{k=0}^{K-1}p_m^{\delta_{k}}\frac{\sigma}{K}-\frac{p_m^\sigma-1}{\ln p_m}<0\label{lem-inequalities2}\ee
\end{lem}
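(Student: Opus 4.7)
The plan is to recognize that $\sum_{k=0}^{K-1} p_m^{\delta_{k+1}} (\sigma/K)$ and $\sum_{k=0}^{K-1} p_m^{\delta_k} (\sigma/K)$ are, respectively, the right and left Riemann sums of $f(x) = p_m^x$ on $[0,\sigma]$, with uniform mesh $h = \sigma/K$ and nodes $\delta_k = \sigma k/K$, while $(p_m^\sigma-1)/\ln p_m = \int_0^\sigma p_m^x \, dx$. Calling these $R$, $L$, and $I$, the inequalities (\ref{lem-inequalities}) and (\ref{lem-inequalities2}) are exactly two-sided bounds on $R-I$ and $L-I$, and I will prove them using the classical composite trapezoidal rule.

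Since $p_m > 1$, the function $f(x)=p_m^x$ is strictly increasing, so on each subinterval $[\delta_k,\delta_{k+1}]$ the left rectangle lies strictly below the graph and the right rectangle strictly above. Summing gives $L < I < R$, which takes care of the strict inequalities at the right end of (\ref{lem-inequalities}) and at the right end of (\ref{lem-inequalities2}).

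For the quantitative bounds, introduce the composite trapezoidal sum $T = (L+R)/2$. Telescoping,
$$R - L \;=\; h\bigl(p_m^\sigma - p_m^0\bigr) \;=\; \frac{\sigma(p_m^\sigma - 1)}{K},$$
so $R = T + \sigma(p_m^\sigma-1)/(2K)$ and $L = T - \sigma(p_m^\sigma-1)/(2K)$. Since $f''(x) = \ln^2 p_m \cdot p_m^x > 0$ and $\max_{x\in[0,\sigma]} f''(x) = \ln^2 p_m \cdot p_m^\sigma$, the standard composite-trapezoidal error identity $T - I = (h^3/12)\sum_{k=0}^{K-1} f''(\xi_k)$ (with $\xi_k \in [\delta_k,\delta_{k+1}]$) gives the clean two-sided bound
$$0 \;<\; T - I \;\leq\; \frac{h^3 K}{12}\,\ln^2 p_m\,p_m^\sigma \;=\; \frac{\sigma^3 p_m^\sigma \ln^2 p_m}{12 K^2}.$$

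Substituting back, $R - I = (T-I) + \sigma(p_m^\sigma-1)/(2K)$ yields (\ref{lem-inequalities}) directly (both summands nonnegative, and bounded above by the two terms on the right of (\ref{lem-inequalities})). Likewise $L - I = (T-I) - \sigma(p_m^\sigma-1)/(2K)$ lies in $[-\sigma(p_m^\sigma-1)/(2K),\, \sigma^3 p_m^\sigma\ln^2 p_m/(12K^2) - \sigma(p_m^\sigma-1)/(2K))$, which is contained in the interval stated in (\ref{lem-inequalities2}) (in fact the lower bound one gets is sharper than claimed, since the $T-I$ term is nonnegative). There is no real obstacle: this is a short application of the classical trapezoidal-rule error estimate to the explicit exponential $p_m^x$, and the only care required is to track the signs correctly when reassembling $R$ and $L$ from $T$ and $(R-L)/2$.
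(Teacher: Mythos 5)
Your proof is correct and follows essentially the same route as the paper: recognize the two Riemann sums and the integral $\int_0^\sigma p_m^x\,dx=\frac{p_m^\sigma-1}{\ln p_m}$, get the strict inequalities from monotonicity of $p_m^x$, and then pass through the composite trapezoidal sum $T$ with the standard error bound $|T-I|\leq\frac{M(b-a)^3}{12K^2}$, $M=\ln^2 p_m\cdot p_m^\sigma$, and the identity $R-L=\frac{\sigma(p_m^\sigma-1)}{K}$. The only (harmless) difference is that you make the decomposition $R=T+\tfrac12(R-L)$, $L=T-\tfrac12(R-L)$ fully explicit and note in passing that the resulting lower bound for $L-I$ is sharper than stated, whereas the paper simply quotes the trapezoidal error bound and converts it to one-sided bounds for the right and left sums.
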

\begin{proof}
The right (resp. left) Riemann sum $\sum_{k=0}^{K-1}p_m^{\delta_{k+1}}\frac{\sigma}{K}$ (resp. $\sum_{k=0}^{K-1}p_m^{\delta_{k}}\frac{\sigma}{K}$) converges as $K\to\infty$ to the integral $\int_0^\sigma e^{\delta\ln p_m}\de\delta=\frac{p_m^\sigma-1}{\ln p_m}$. Moreover, since the function $t\mapsto p_m^t$ is increasing, the right (resp. left) sum is strictly bigger (resp. smaller) than the integral. This proves the first inequality in (\ref{lem-inequalities}) and the second inequality in (\ref{lem-inequalities2}). 
A classical result from Calculus states that in the absolute value of the error performed by approximating the integral $\int_a^b f(x)\de x$ by the trapezoidal Riemann sum $$\left(\ha f(x_0)+f(x_1)+f(x_2)+\ldots+f(x_{K-1})+\ha f(x_K)\right)\Delta x,$$ $x_k=a+k\frac{b-a}{K}$ is bounded by $\frac{M(b-a)^3}{12K^2}$ where $\sup_{a\leq x\leq b}|f''(x)|\leq M$.
This implies that the error for the right Riemann sum $$\left(f(x_1)+\ldots+f(x_{K})\right)\Delta x$$ is bounded from above by $\frac{M(b-a)^3}{12K^2}+(f(b)-f(a))\frac{b-a}{2K}$ and gives the second inequality of (\ref{lem-inequalities}) when applied to the function $t\mapsto p_m^t$ over the interval $[0,\sigma]$.
On the other hand, the error given by the left Riemann sum  $$\left(f(x_0)+\ldots+f(x_{K-1})\right)\Delta x$$ 
is bounded from below by $-\frac{M(b-a)^3}{12K^2}-(f(b)-f(a))\frac{b-a}{2K}$ and this gives the first inequality in (\ref{lem-inequalities2}).
\end{proof}

\begin{proof}[Proof of Theorem \ref{THM2}]
\bey
\frac{M_m(p_m^\sigma)}{p_m^\sigma}=\frac{Z_m}{p_m^\sigma}\sum_{\scriptsize{\ba{c}n\in\Omega_m\\n\leq p_m^\sigma\ea}}n\pi_m(n)=\frac{Z_m}{p_m^\sigma}\sum_{k=0}^{K-1}\sum_{\scriptsize{\ba{c}n\in\Omega_m\\p_m^{\delta_k}< n\leq p_m^{\delta_{k+1}}\ea}}n\pi_m(n)\leq\nonumber\\
\leq\frac{Z_m}{p_m^\sigma}\sum_{k=0}^{K-1}p_m^{\delta_{k+1}}\sum_{\scriptsize{\ba{c}n\in\Omega_m\\p_m^{\delta_k}< n\leq p_m^{\delta_{k+1}}\ea}}\pi_m(n)=\frac{Z_m}{p_m^\sigma}e^{-\gamma}\sum_{k=0}^{K-1}p_m^{\delta_{k+1}}\frac{\sigma}{K}+\frac{Z_m}{p_m^\sigma}\sum_{k=0}^{K-1}p_m^{\delta_{k+1}}E^{(\sigma)}_m(k,K)\nonumber
\eey
Applying Lemma \ref{lemma} to the right Riemann sum $\sum_{k=0}^{K-1}p_m^{\delta_{k+1}}\frac{\sigma}{K}$ we obtain the estimate
\bey
\frac{M_m(p_m^\sigma)}{p_m^\sigma}\leq \frac{e^{-\gamma}Z_m}{\ln p_m}\frac{p_m^\sigma-1}{p_m^\sigma}+\frac{e^{-\gamma}Z_m}{\ln p_m}\left(\frac{\sigma^3\ln^3p_m}{12 K^2}+\frac{p_m^\sigma-1}{p_m^\sigma}\frac{\sigma\ln p_m}{2K}\right)+\frac{Z_m}{p_m^\sigma}\sum_{k=0}^{K-1}p_m^{\delta_{k+1}}E^{(\sigma)}_m(k,K)\nonumber
\eey
which is true for every $m$ and $K$.
Since, as $m\to\infty$, $\frac{M_m(p_m^\sigma)}{p_m^\sigma}\to\frac{1}{\zeta(2)}$, $\frac{Z_m}{\ln p_m}\to\frac{e^\gamma}{\zeta(2)}$, and by hypothesis $\frac{\ln^3 p_m}{K(m)^2}\to c$ (and thus $\frac{\ln p_m}{K(m)}\to0$), then for every $\varepsilon>0$ the inequality
\be\frac{Z_m}{p_m^\sigma}\sum_{k=1}^{K(m)-1}p_m^{\delta_{k+1}}E^{(\sigma)}_m(k,K(m))\geq
-\frac{c\sigma^3}{12\zeta(2)}-\varepsilon\nonumber
\ee
holds true for sufficiently large $m$. By noticing that $p_m^{\delta_{k+1}}=p_m^{\delta_k}\left(1+(e^{\frac{\sigma\ln p_m}{K(m)}}-1)\right)$ and $0\leq (e^{\frac{\sigma\ln p_m}{K(m)}}-1)\to0$ as $m\to\infty$, we obtain the first inequality of (\ref{statement-THM2}). 
On the other hand
\be\frac{M_m(p_m^\sigma)}{p_m^\sigma}\geq\frac{Z_m}{p_m^\sigma}\sum_{k=0}^{K-1}p_m^{\delta_k}\sum_{\scriptsize{\ba{c}n\in\Omega_m\\p_m^{\delta_k}< n\leq p_m^{\delta_{k+1}}\ea}}\pi_m(n)=\frac{Z_m}{p_m^\sigma}e^{-\gamma}\sum_{k=0}^{K-1}p_m^{\delta_k}\frac{\sigma}{K}+\frac{Z_m}{p_m^\sigma}\sum_{k=0}^{K-1}p_m^{\delta_k}E_m^{(\sigma)}(k,K)\nonumber\ee
and applying Lemma \ref{lemma} to the left Riemann sum $\sum_{k=0}^{K-1}p_m^{\delta_k}\frac{\sigma}{K}$ we obtain the estimate
\be\frac{M_m(p_m^\sigma)}{p_m^\sigma}\geq \frac{e^{-\gamma}Z_m}{\ln p_m}\frac{p_m^\sigma-1}{p_m^\sigma}-\frac{e^{-\gamma}Z_m}{\ln p_m}\left(\frac{\sigma^3\ln^3p_m}{12K^2}+\frac{p_m^\sigma-1}{p_m^\sigma}\frac{\sigma\ln p_m}{2K}\right)+\frac{Z_m}{p_m^\sigma}\sum_{k=0}^{K-1}p_m^{\delta_k}E_m^{(\sigma)}(k,K)\nonumber
\ee
which is true for every $m$ and $K$. Proceeding as above we have that for every $\varepsilon>0$ the inequality
\be\frac{Z_m}{p_m^\sigma}\sum_{k=0}^{K(m)-1}p_m^{\delta_k}E_m^{(\sigma)}(k,K(m))\leq\frac{c\sigma^3}{12\zeta(2)}+\varepsilon
\nonumber\ee
holds for sufficiently large $m$ and we have 
the second inequality of (\ref{statement-THM2}).
\end{proof}
An immediate consequence of 
Theorem \ref{THM2} is the following
\begin{cor}\label{cor1}
Consider a function $K(m)$ such that $\lim_{m\to\infty}\frac{\ln^3p_m}{K(m)^2}=c\geq0$. Then the sum 
of the error terms coming from (\ref{thm1-for-interval-K}), with weights $p_m^{-\sigma+\delta_k}$, satisfies the asymptotic estimate
\be\sum_{k=0}^{K(m)-1}\frac{E_m^{(\sigma)}(k,K(m))}{p_m^{\sigma-\delta_k}}=\begin{cases}O\!\left(\frac{1}{\ln p_m}\right)&\mbox{if $c>0$;}\\ &\\o\!\left(\frac{1}{\ln p_m}\right)&\mbox{if $c=0$;}\end{cases}\label{statement-cor1}
\ee
 as $m\to\infty$ for every $0<\sigma\leq1$.
\end{cor}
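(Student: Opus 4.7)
The plan is to observe that the corollary is essentially a direct rewriting of Theorem~\ref{THM2}. First I would rearrange the weights in the statement of Theorem~\ref{THM2}: since $p_m^{\delta_k}/p_m^\sigma = p_m^{-(\sigma-\delta_k)}$, the inner quantity can be rewritten as
\be
\frac{Z_m}{p_m^\sigma}\sum_{k=0}^{K(m)-1}p_m^{\delta_k}E_m^{(\sigma)}(k,K(m)) = Z_m\sum_{k=0}^{K(m)-1}\frac{E_m^{(\sigma)}(k,K(m))}{p_m^{\sigma-\delta_k}},
\nonumber\ee
so that (\ref{statement-THM2}) becomes a uniform two-sided bound of size $\frac{c\sigma^3}{12\zeta(2)}+\varepsilon$ on $Z_m$ times the sum $S_m$ appearing in (\ref{statement-cor1}), valid for all $m\geq m^*(\varepsilon,K)$ and all $0<\sigma\leq1$.

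Next I would divide through by $Z_m$ and use the asymptotic $Z_m\sim \frac{e^\gamma}{\zeta(2)}\ln p_m$ established in (\ref{asymptotic-Zm}). When $c>0$, the right-hand side $\frac{c\sigma^3}{12\zeta(2)}+\varepsilon$ is a fixed constant (after choosing, say, $\varepsilon=1$), so dividing by $Z_m$ yields $|S_m|\leq C_\sigma/\ln p_m$ for large $m$, which is exactly the $O(1/\ln p_m)$ assertion. When $c=0$, the bound in Theorem~\ref{THM2} reads $|Z_m S_m|\leq\varepsilon$ for every preassigned $\varepsilon>0$ once $m$ is large enough, so
\be
\ln p_m\cdot|S_m| \leq \varepsilon\cdot\frac{\zeta(2)}{e^\gamma}(1+o(1))
\nonumber\ee
for $m$ large. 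Since $\varepsilon$ is arbitrary, this forces $\ln p_m\cdot S_m\to 0$, i.e. $S_m=o(1/\ln p_m)$, as required.

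All the analytic work (the counting function $M_m$, the Riemann-sum comparison via Lemma~\ref{lemma}, and the limiting constant $\frac{1}{\zeta(2)}$) is already encoded in Theorem~\ref{THM2}, so the corollary reduces to bookkeeping. Consequently I do not expect a genuine obstacle; the only small point to check is that the dependence of $m^*$ on $\varepsilon$ but not on $\sigma$ in Theorem~\ref{THM2} is what allows the conclusion of the corollary to hold for every $0<\sigma\leq 1$.
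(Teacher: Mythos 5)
Your proof is correct and is exactly the ``bookkeeping'' the paper has in mind: the corollary is stated as an immediate consequence of Theorem~\ref{THM2}, with no separate proof given, and the route you describe (rewrite $\frac{Z_m}{p_m^\sigma}p_m^{\delta_k}=Z_m p_m^{-(\sigma-\delta_k)}$, divide by $Z_m$, invoke $Z_m\sim\frac{e^\gamma}{\zeta(2)}\ln p_m$ from (\ref{asymptotic-Zm}), and in the $c=0$ case let $\varepsilon\to0$ to upgrade $O$ to $o$) is the intended one. Your closing remark about the independence of $m^*$ from $\sigma$ is a correct observation, though it is not strictly needed for the stated conclusion, which only asserts the asymptotic for each fixed $\sigma$.
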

Notice that implied constant in the $O$-notation depends explicitly on $c$ and $\sigma$ by (\ref{asymptotic-Zm}) and (\ref{statement-THM2}). Moreover, as $k$ ranges from $0$ to $K(m)-1$, the weights vary from $p_m^{-\sigma}$ ($\to0$ as $m\to\infty$) to $e^{-\frac{\ln p_m}{K(m)}}$ ($\to1$ as $m\to\infty$). This means that the error terms $E_m^{(\sigma)}(k,K(m))$ corresponding to small values of $k$ are allowed to be larger in absolute value.

 In order to get estimates on the mean value of the error term (for which al weights are equal to $\frac{1}{K(m)}$) we just replace the weights $p_m^{-\sigma+\delta_k}$ by either $p^{-\sigma}$ or $1$ in (\ref{statement-THM2}). This yields, for every $\varepsilon$ and sufficiently large $m$,
\be\frac{1}{Z_m}\left(-\frac{c\sigma^3}{12\zeta(2)}-\varepsilon\right)\leq\sum_{k=0}^{K(m)-1}E_m^{(\sigma)}(k,K(m))\leq\frac{p_m^\sigma}{Z_m}\left(\frac{c\sigma^3}{12\zeta(2)}+\varepsilon\right).\nonumber
\ee
In particular 
we get, as $m\to\infty$, 
\be\langle E_m^{(\sigma)}\rangle:=\frac{1}{K(m)}\sum_{k=0}^{K(m)-1}E_m^{(\sigma)}(k,K(m))=\begin{cases}O\!\left(\frac{p_m^\sigma}{\ln^{5/2}p_m}\right)&\mbox{if $c>0$;}\\&\\o\!\left(\frac{p_m^\sigma}{K(m)\ln p_m}\right)&\mbox{if $c=0$.}\end{cases}\nonumber\ee
Let us point out that, even though by (\ref{thm1-for-interval-K}) the error term $E_m^{(\sigma)}(k,K(m))$ tends to zero as $m\to\infty$ for each $k$, it is not a priori true that $\langle E_m^{(\sigma)}\rangle$ tends to zero as well. It follows from our Theorem \ref{THM2} that this is indeed the case when
$\frac{p_m^\sigma}{K(m)\ln p_m}$ remains bounded (i.e. a particular case of $c=0$). 
Let us summarize this fact in the following
\begin{cor}
Let $0<\sigma\leq1$ and consider a function $K(m)$ such that $\lim_{m\to\infty}\frac{p_m^\sigma}{K(m)\ln p_m}<\infty$. Then, as $m\to\infty$,
\be\langle E_m^{(\sigma)}\rangle=o\!\left(\frac{p_m^\sigma}{K(m)\ln p_m}\right).\label{statement-cor2}\ee
\end{cor}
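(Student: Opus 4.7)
The plan is to derive this corollary directly from the two-sided estimate
$$\frac{1}{Z_m}\left(-\frac{c\sigma^3}{12\zeta(2)}-\varepsilon\right)\leq\sum_{k=0}^{K(m)-1}E_m^{(\sigma)}(k,K(m))\leq\frac{p_m^\sigma}{Z_m}\left(\frac{c\sigma^3}{12\zeta(2)}+\varepsilon\right)$$
already established in the paragraph preceding the statement, once we verify that the growth hypothesis on $K(m)$ forces the Theorem \ref{THM2} constant $c$ to vanish and then divide through by $K(m)$.

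First, I would check that $\limsup_{m\to\infty}\frac{p_m^\sigma}{K(m)\ln p_m}<\infty$ implies $c=0$. Indeed, if the ratio is bounded by some $C$ for $m$ large, then $K(m)\ge C^{-1}p_m^\sigma/\ln p_m$, so
$$\frac{\ln^3 p_m}{K(m)^2}\leq \frac{C^2\ln^5 p_m}{p_m^{2\sigma}}\longrightarrow 0,$$
which gives $c=0$ for the sequence $K(m)$ under consideration. The two-sided estimate therefore specialises to: for every $\varepsilon>0$ and all sufficiently large $m$,
$$-\frac{\varepsilon}{Z_m}\leq \sum_{k=0}^{K(m)-1}E_m^{(\sigma)}(k,K(m))\leq\frac{\varepsilon\,p_m^\sigma}{Z_m}.$$

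Second, I would divide by $K(m)$ and compare to the target rate $p_m^\sigma/(K(m)\ln p_m)$, using the asymptotic $Z_m\sim e^\gamma\ln(p_m)/\zeta(2)$ from (\ref{asymptotic-Zm}). The upper bound then reads
$$\frac{\langle E_m^{(\sigma)}\rangle}{p_m^\sigma/(K(m)\ln p_m)}\leq \frac{\varepsilon\,\ln p_m}{Z_m}= \varepsilon\,\frac{\zeta(2)}{e^\gamma}+o(1),$$
while the lower bound yields
$$\frac{\langle E_m^{(\sigma)}\rangle}{p_m^\sigma/(K(m)\ln p_m)}\geq -\frac{\varepsilon\,\ln p_m}{p_m^\sigma Z_m}\longrightarrow 0,$$
the convergence holding because $\sigma>0$ forces $p_m^\sigma\to\infty$. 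Taking $\limsup$ and $\liminf$ in $m$ and then letting $\varepsilon\downarrow 0$ yields
$$\lim_{m\to\infty}\frac{\langle E_m^{(\sigma)}\rangle}{p_m^\sigma/(K(m)\ln p_m)}=0,$$
which is exactly (\ref{statement-cor2}).

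There is no real obstacle: the content of the argument is the implication ``bounded-ratio hypothesis $\Rightarrow c=0$'', which removes the deterministic piece in the two-sided bound and leaves only the arbitrarily small $\varepsilon$-slack. The slight asymmetry between the upper and lower bounds (a factor $p_m^\sigma$ on the upper side) is harmless since the upper side matches the target scale exactly while the lower side beats it by $p_m^{-\sigma}$, which tends to zero.
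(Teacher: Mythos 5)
Your proposal is correct and follows essentially the same route the paper takes implicitly: you verify that the bounded-ratio hypothesis forces $c=0$ in Theorem~\ref{THM2}, then specialize the two-sided estimate on $\sum_k E_m^{(\sigma)}(k,K(m))$ to $c=0$, divide by $K(m)$, and compare against the target scale $p_m^\sigma/(K(m)\ln p_m)$ using $Z_m\sim\tfrac{e^\gamma}{\zeta(2)}\ln p_m$. The paper presents this corollary as an immediate consequence of the displayed $c=0$ asymptotic $\langle E_m^{(\sigma)}\rangle=o\!\left(\tfrac{p_m^\sigma}{K(m)\ln p_m}\right)$, and your limsup/liminf bookkeeping with the arbitrary $\varepsilon$ is a faithful unpacking of that step.
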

In other words, if $K$ grows sufficiently fast (namely as $\mbox{\emph{const}}\cdot\frac{p_m^\sigma}{\ln p_m}$ or faster), then the mean value of the error $\langle E_m^{(\sigma)}\rangle$ tends to zero as $m\to\infty$ and the rate of convergence to zero is controlled explicitly in terms of $\sigma$ and $K$. 

Notice that one would expect the error term $E_m^{(\sigma)}(k,K(m))$ in (\ref{thm1-for-interval-K}) to be $o\!\left(\frac{1}{K(m)}\right)$, 
however we could only derive the weaker asymptotic estimates (\ref{statement-cor1}) and (\ref{statement-cor2}) from Theorem \ref{THM2}. A possible approach to further investigate the size of the error term in (\ref{thm1-for-interval-K}) would be to first prove an analogue of Theorem \ref{THM1} for shrinking intervals. This is, however, beyond the aim of this paper. 

\section*{Appendix}
This Appendix contains the estimates for the error terms in the proof Theorem \ref{THM1}. By $C_j$, $j=1,\ldots,21$, we will denote some positive constants.

The first term of (\ref{pf0}) tends to zero as $m\to\infty$ uniformly in $\lambda$. In fact using 
(\ref{strongPNT})
we obtain
\bey
&&\Re f_m(p_m+1)N(p_m)=\frac{N(p_m)}{2}\ln\left(\frac{(p_m+1)^2+2(p_m+1)\cos\left(\frac{\lambda\ln(p_m+1)}{\ln p_m}\right)+1}{(p_m+2)^2}\right)=\nonumber\\
&&=\frac{N(p_m)}{2}\left(\ln\left(1+O\!\left(\frac{1}{p_m}\right)\right)-\ln\left(1+O\!\left(\frac{1}{p_m}\right)\right)\right)
=O\!\left(\frac{p_m}{\ln p_m}\right)O\!\left(\frac{1}{p_m}\right)=O\!\left(\frac{1}{\ln p_m}\right),\nonumber\eey
and 
\bey
\Im f_m(p_m+1)N(p_m)&=&N(p_m)\arctan\left(\frac{\sin\left(\lambda\frac{\ln(p_m+1)}{\ln p_m}\right)}{p_m+1+\cos\left(\lambda\frac{\ln(p_m+1)}{\ln p_m}\right)}\right)=\nonumber\\
&=&O\!\left(\frac{p_m}{\ln p_m}\right)O\!\left(\frac{1}{p_m}\right)=O\!\left(\frac{1}{\ln p_m}\right)\nonumber
\eey
as $m\to\infty$, and the implied constants do not depend on $\lambda$.
An explicit computation shows that
\be
(\Re f_m)''(s)=f^{(1)}_m(s)+f^{(2)}_m(s)+f^{(3)}_m(s),\nonumber
\ee
where
\bey
f^{(1)}_m(s)&=&-\lambda^2\frac{2s+(1+s^2)\cos\left(\frac{\lambda\ln s}{\ln p_m}\right)}{s\left(s^2+2s\cos\left(\frac{\lambda\log s}{\log p_m}\right)+1\right)^2\ln^2p_m},\nonumber\\
f^{(2)}_m(s)&=&\lambda\frac{\left(3s^2+2s\cos\left(\frac{\lambda\ln s}{\ln p_m}\right)-1\right)\sin\left(\frac{\lambda\ln s}{\ln p_m}\right)}{s\left(s^2+2s\cos\left(\frac{\lambda\log s}{\log p_m}\right)+1\right)^2\ln p_m},\nonumber\\
f^{(3)}_m(s)&=&\frac{2\left(\cos\left(\frac{\lambda\ln s}{\ln p_m}\right)-1\right)\left(s^3-s^2-s-1+(s^2-2s-1)\cos\left(\frac{\lambda\ln s}{\ln p_m}\right)\right)}{(1+s)^2\left(s^2+2s\cos\left(\frac{\lambda\log s}{\log p_m}\right)+1\right)^2}\nonumber.
\eey
We have
\be
\left|f^{(1)}_m(s)\right|\leq \frac{C_1\lambda^2}{s^3\ln^2p_m},\hspace{1cm}\left|f^{(2)}_m(s)\right|\leq\frac{C_2|\lambda|}{s^3\ln p_m}\nonumber
\ee
and thus
\bey
&&\left|\sum_{t=1}^{p_m}N(t-1)\tau_1f^{(1)}_m(t+\tau_1')\right|\leq\frac{C_3\lambda^2}{\ln^2 p_m}\sum_{t=2}^{p_m}\frac{1}{t^2\ln t}\longrightarrow0\hspace{.5cm}\mbox{and 
}\nonumber\\
&&\left|\sum_{t=1}^{p_m}N(t-1)\tau_1f^{(2)}_m(t+\tau_1')\right|\leq\frac{C_4|\lambda|}{\ln p_m}\sum_{t=2}^{p_m}\frac{1}{t^2\ln t}\longrightarrow0\hspace{.5cm}\mbox{as $m\to\infty$}.\nonumber
\eey
The third function satisfies the estimate
\be
\left|f^{(3)}_m(s)\right|\leq\frac{s^3\left|2\cos\left(\frac{\lambda\ln s}{\ln p_m}\right)-2\right|+s^2C_5}{(1+s)^2(1-s)^4}\leq\frac{C_6\left(1-\cos\left(\frac{\lambda\ln s}{\ln p_m}\right)\right)}{s^3}.\nonumber
\ee
We now perform the same change of variables 
$v=v(t)=\frac{\ln t}{\ln p_m}$ as before (using $\tau_3$ and $\tau_3'$ as in the proof of Theorem \ref{THM1}). 
We get
\bey
&&\left|\sum_{t=1}^{p_m}N(t-1)\tau_1f^{(3)}_m(t+\tau_1')\right|\leq C_7\sum_{t=2}^{p_m}\frac{1-\cos\left(\frac{\lambda\ln t}{\ln p_m}\right)}{t^2\ln t}\leq\nonumber\\
&&\leq C_8\sum_{v}\left(\de v+\frac{\tau_3}{(t+\tau_3')^2\ln p_m}\right)\frac{1-\cos(\lambda u)}{t\,v}\longrightarrow0\nonumber
\eey
as $m\to\infty$.
Another explicit computation shows that 
\be
(\Im f_m)''(s)=f^{(4)}_m(s)+f^{(5)}_m(s)+f^{(6)}_m(s),\nonumber
\ee
where
\bey
f^{(4)}_m(s)&=&\lambda^2\frac{(s^2-1)\sin\left(\frac{\lambda\log s}{\log p_m}\right)}{s\left(s^2+2s\cos\left(\frac{\lambda\log s}{\log p_m}\right)+1\right)^2\ln^2p_m},\nonumber\\
f^{(5)}_m(s)&=&-\lambda\frac{1+5s^2+2s^2\cos^2\left(\frac{\lambda\log s}{\log p_m}\right)+(3s^3+5s)\cos\left(\frac{\lambda\log s}{\log p_m}\right)}{s^2\left(s^2+2s\cos\left(\frac{\lambda\log s}{\log p_m}\right)+1\right)^2\ln p_m}\nonumber\\
f^{(6)}_m(s)&=&\frac{2\left(s+\cos\left(\frac{\lambda\log s}{\log p_m}\right)\right)\sin\left(\frac{\lambda\log s}{\log p_m}\right)}{\left(s^2+2s\cos\left(\frac{\lambda\log s}{\log p_m}\right)+1\right)^2}.\nonumber
\eey
We have the estimates
\be
\left|f^{(4)}_m(s)\right|\leq\frac{C_{10}\lambda^2}{s^3\ln^2 p_m},\hspace{1cm}\left|f^{(5)}_m(s)\right|\leq\frac{C_{11}|\lambda|}{s^3\ln p_m}\nonumber
\ee
and thus
\bey
&&\left|\sum_{t=1}^{p_m}N(t-1)\tau_1f^{(4)}_m(t+\tau_1')\right|\leq\frac{C_{12}\lambda^2}{\ln^2 p_m}\sum_{t=2}^{p_m}\frac{1}{t^2\ln t}\longrightarrow0\hspace{.5cm}\mbox{and 
}\nonumber\\
&&\left|\sum_{t=1}^{p_m}N(t-1)\tau_1f^{(5)}_m(t+\tau_1')\right|\leq\frac{C_{13}|\lambda|}{\ln p_m}\sum_{t=2}^{p_m}\frac{1}{t^2\ln t}\longrightarrow0\hspace{.5cm}\mbox{as $m\to\infty$}.\nonumber
\eey
The estimate
\be\left|f^{(6)}_m(s)\right|\leq\frac{ C_{14}s \sin\left(\frac{\lambda\log s}{\log p_m}\right)}{(s-1)^4}\leq \frac{C_{15}\sin\left(\frac{\lambda\log s}{\log p_m}\right)}{s^3}\nonumber
\ee
yields, as $m\to\infty$,
\be
\left|\sum_{t=1}^{p_m}N(t-1)\tau_1f^{(6)}_m(t+\tau_1')\right|\leq C_{15}\sum_{t=2}^{p_m}\frac{\sin\left(\frac{\lambda\ln t}{\ln p_m}\right)}{t^2\ln t}\leq C_{16}\sum_{v}\left(\de v+\frac{\tau_3}{(t+\tau_3')^2\ln p_m}\right)\frac{\sin(\lambda v)}{t\,v}
\rightarrow0.\nonumber
\ee
This concludes the analysis of the error terms coming from (\ref{pf0}). 

Let us now deal with the error terms coming from (\ref{pf1}). One sum (giving the main term) is already discussed in the proof of Theorem \ref{THM1}. Amongst the remaining eleven sums coming from (\ref{pf1}), it is enough to check that the following three tend to zero as $m\to\infty$ (the other eight being dominated by these):
\bey
&&\left|\sum_{t=2}^{p_m}\frac{1}{t\ln t}\frac{(2t-1)\left(e^{\frac{i\lambda\ln t}{\ln p_m}}-1\right)}{(t+1)^2}\right|\leq C_{17}\sum_{t=1}^{p_m}\left|\frac{e^{\frac{i\lambda\ln t}{\ln p_m}}-1}{\frac{\ln t}{\ln p_m}}\right|\frac{1}{t^2\ln p_m}\leq\frac{C_{18}|\lambda|}{\ln p_m}\sum_{t=2}^{p_m}\frac{1}{t^2}\longrightarrow0,\nonumber\\
&&\left|\frac{i\lambda}{\ln m}\sum_{t=2}^{p_m}\frac{1}{t\ln t}\frac{t}{t+1}e^{\frac{i\lambda\ln t}{\ln p_m}}\right|\leq \frac{C_{19}|\lambda|}{\ln m}\sum_v\left(\de v+\frac{\tau}{(t+\tau')^2\ln p_m}\right)\frac{e^{i\lambda v}}{v}\longrightarrow0,\nonumber\\
&&\left|\sum_{t+2}^{p_m}\frac{1}{t\ln t}\frac{\left(e^{\frac{i\lambda\ln t}{\ln p_m}}-1\right)^2}{t+e^{\frac{i\lambda \ln t}{\ln p_m}}}\right|\leq C_{20}\sum_{t=2}^{p_m}\left|\frac{\left(e^{\frac{i\lambda\ln t}{\ln p_m}}-1\right)^2}{\left(\frac{\ln t}{\ln p_m}\right)^2}\right|\frac{\ln t}{t^2\ln^2 p_m}\leq\frac{C_{21}\lambda^2}{\ln^2p_m}\sum_{t=2}^{p_m}\frac{\ln t}{t^2}\longrightarrow0.\nonumber
\eey

\section*{Acknowledgments}
We would like to thank Alex Kontorovich and Andrew Granville for useful discussions and comments. The second author acknowledges the financial support from the NSF Grant 0600996. 

\addcontentsline{toc}{section}{Bibliography}
\bibliographystyle{plain}
\bibliography{bibliography-mobius}
\end{document}